\theoremstyle{plain}
\newtheorem{theorem}{Theorem}
\theoremstyle{definition}
\newtheorem{definition}[theorem]{Definition}
\newtheorem{example}[theorem]{Example}
\newtheorem{remark}[theorem]{Remark}
\newtheorem{conjecture}[theorem]{Conjecture}
\theoremstyle{remark}
\newcommand{\R}{\mathbb R}
\newcommand{\fa}{\mathfrak a}
\newcommand{\fg}{\mathfrak g}
\newcommand{\fh}{\mathfrak h}
\newcommand{\fk}{\mathfrak k}
\newcommand{\fp}{\mathfrak p}
\newcommand{\ft}{\mathfrak t}
\DeclareMathOperator{\SO}{SO}
\DeclareMathOperator{\SU}{SU}
\newcommand{\su}{\mathfrak{su}}
\DeclareMathOperator{\Span}{Span}
\newcommand{\inner}[2]{\langle {#1},{#2}\rangle }
\newcommand{\innerdots}{\langle {\cdot},{\cdot}\rangle }
\DeclareMathOperator{\Ad}{Ad}
\DeclareMathOperator{\ad}{ad}
\DeclareMathOperator{\diam}{diam}
\title[On the smallest Laplace eigenvalue]{On the smallest Laplace eigenvalue for naturally reductive metrics on compact simple Lie groups}
\author{Emilio A.~Lauret}
\address{Instituto de Matemática (INMABB), Departamento de Matemática, Universidad Nacional del Sur (UNS)-CONICET, Bahía Blanca, Argentina.}
\email{emilio.lauret@uns.edu.ar}
\subjclass[2010]{Primary 35P15, Secondary 58C40, 53C30}
\keywords{Naturally reductive metric, Laplace eigenvalue, diameter, compact simple Lie group. }
\thanks{This research was supported by grants from CONICET, FONCyT, SeCyT, and the Alexander von Humboldt Foundation (return fellowship)}
\date{December 2019}
\begin{document}
\maketitle

\begin{abstract}
Eldredge, Gordina and Saloff-Coste recently conjectured that, for a given compact connected Lie group $G$, there is a positive real number $C$ such that $\lambda_1(G,g)\operatorname{diam}(G,g)^2\leq C$ for all left-invariant metrics $g$ on $G$. 
In this short note, we establish the conjecture for the small subclass of naturally reductive left-invariant metrics on a compact simple Lie group. 
\end{abstract}

\bigskip 

For an arbitrary compact homogeneous Riemannian manifold $(M,g)$, Peter Li~\cite{Li80} proved that 
\begin{equation}\label{eq:Li}
\lambda_1(M,g)\geq \frac{\pi^2/4}{\diam(M,g)^2}. 
\end{equation}
Here, $\lambda_1(M,g)$ denotes the smallest positive eigenvalue of the Laplace--Beltrami operator on $(M,g)$ and $\diam(M,g)$ is the diameter of $(M,g)$, that is, the maximum Riemannian distance between two points in $M$. 
This lower bound has been recently improved by Judge and Lyons~\cite[Thm.~1.3]{JudgeLyons19}.

In contrast, there is no uniform upper bound for the term $\lambda_1(M,g)\,\diam(M,g)^2$ among all compact homogeneous Riemannian manifolds. 
For instance, the product $(M_n,g_n)$ of $n$ $d$-dimensional round spheres of constant curvature one satisfies $\lambda_1(M_n,g_n)=d$ and $\diam(M_n,g_n)= \sqrt{n}\pi$ goes to infinity when $n\to\infty $.

Eldredge, Gordina and Saloff-Coste have recently conjectured the existence of a uniform upper bound valid on special classes of homogeneous Riemannian manifolds, namely, the space of left-invariant metrics on a fixed compact connected Lie group.

\begin{conjecture}\cite[(1.2)]{EldredgeGordinaSaloff18} \label{conj}
Given $G$ a compact connected Lie group, there exists $C>0$ (depending only on $G$) such that 
\begin{equation}\label{eq:EGS}
\lambda_1(M,g)\leq \frac{C}{\diam(M,g)^2}
\end{equation}
for all left-invariant metrics $g$ on $G$. 
\end{conjecture}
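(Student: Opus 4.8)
To establish the conjecture for naturally reductive left-invariant metrics on a compact simple Lie group $G$, I would argue as follows. Let $g_0=-B$ be the bi-invariant metric attached to the Killing form $B$ of $\fg$. By the structure theorem of D'Atri and Ziller, every naturally reductive left-invariant metric on $G$ is, up to an automorphism of $G$ and an overall rescaling — neither of which affects the scale-invariant quantity $\lambda_1(G,g)\,\diam(G,g)^2$ — of the form $g=g_0|_{\fm}\oplus q$, where $\fh\subseteq\fg$ is the Lie algebra of a closed connected subgroup $H$, $\fm:=\fh^{\perp}$ is the $g_0$-orthogonal complement (which is also the $g$-orthogonal complement), and $q=g|_{\fh}$ is an $\Ad(H)$-invariant inner product on $\fh$. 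If $\fm=0$ then $\fh=\fg$ is simple, $q$ is forced to be a positive multiple of $g_0$, and $g$ is merely a rescaling of $g_0$; in that case $\lambda_1(G,g)\,\diam(G,g)^2=\lambda_1(G,g_0)\,\diam(G,g_0)^2$ depends only on $G$. Henceforth assume $\fm\neq0$, i.e.\ $H\subsetneq G$.

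First I would bound the diameter. The subalgebra of $\fg$ generated by $\fm$ is an ideal, because $[\fh,\fm]\subseteq\fm$, and it is nonzero, so by simplicity of $\fg$ it equals $\fg$; thus $\fm$ is bracket-generating. Hence the left-invariant distribution $\mathcal D\subseteq TG$ with $\mathcal D_e=\fm$ is bracket-generating, and by the Chow--Rashevskii theorem any two points of $G$ can be joined by a $\mathcal D$-horizontal path. Along any such path the velocity lies in $\fm$, where $g$ agrees with $g_0$, so the $g$-length equals the $g_0$-length; therefore $\diam(G,g)$ is bounded above by the Carnot--Carath\'eodory diameter $D_{\fm}$ of $(G,\mathcal D,g_0|_{\fm})$, a finite number depending only on the subspace $\fm$. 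Enlarging the horizontal distribution can only decrease this diameter, so if $\fh$ is contained in a maximal proper subalgebra $\fh_{\max}$ then $D_{\fm}\le D_{\fm_{\max}}$ with $\fm_{\max}:=\fh_{\max}^{\perp}$; since by Dynkin's classification there are only finitely many conjugacy classes of maximal connected subgroups of $G$, this gives $\diam(G,g)\le D_G<\infty$ with $D_G$ depending only on $G$.

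Next I would bound $\lambda_1$. Recall that the Laplace spectrum of $(G,g)$ is the union, over the irreducible representations $\pi$ of $G$ with space $V_\pi$, of the spectra of the Casimir operators $\Cas_{\pi,g}$ of $(\fg,g)$ acting on $V_\pi$. Because $\fm$ and $q$ are $\Ad(H)$-invariant, $\Cas_{\pi,g}$ commutes with $\pi(H)$ and so preserves the space $V_\pi^{\fh}$ of $\fh$-fixed vectors; on $V_\pi^{\fh}$ the $\fh$-directions contribute nothing, whence $\Cas_{\pi,g}$ restricts there to the same operator as $\Cas_{\pi,g_0}$, namely to multiplication by the bi-invariant Casimir eigenvalue $\Cas(\pi)$, which is positive when $\pi$ is nontrivial. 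Since $G/H$ has positive dimension there exists a nontrivial $H$-spherical representation $\pi$ (one with $V_\pi^{\fh}\neq0$); for such a $\pi$ the number $\Cas(\pi)$ is a positive eigenvalue of $\Delta_g$, and hence $\lambda_1(G,g)\le\min\{\Cas(\pi):\ \pi\ \text{nontrivial},\ V_\pi^{\fh}\neq0\}$. An $H_{\max}$-spherical representation is in particular $H$-spherical, so this minimum is at most the corresponding one for $\fh_{\max}$, and Dynkin finiteness again gives $\lambda_1(G,g)\le E_G<\infty$ with $E_G$ depending only on $G$. Combining the two bounds, $\lambda_1(G,g)\,\diam(G,g)^2\le\max\{\lambda_1(G,g_0)\,\diam(G,g_0)^2,\ E_G\,D_G^2\}$, which proves the conjecture for this class of metrics.

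The step I expect to be the main obstacle is the uniformity in the subgroup $H$: a priori $G$ carries infinitely many conjugacy classes of closed connected subgroups, and one has to rule out both a blow-up of the diameter and a persistent positive lower bound for $\lambda_1$ as $H$ (and the inner product $q$) vary. What rescues the argument is that neither the diameter bound nor the eigenvalue bound depends on $q$ at all — only on the subspace $\fm$ — combined with the monotonicity of both $D_{\fm}$ and the smallest nontrivial spherical Casimir under enlarging $\fh$, which reduces matters to the finitely many maximal connected subgroups provided by Dynkin's classification. The remaining ingredients, namely the Casimir description of the Laplace spectrum of a left-invariant metric and Chow--Rashevskii accessibility, are standard.
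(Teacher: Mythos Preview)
Your argument is correct and follows the same overall skeleton as the paper's proof: D'Atri--Ziller normal form, normalise the $\fp$-part, bound the diameter by the Carnot--Carath\'eodory diameter of the left-invariant sub-Riemannian structure on $\fp$ via Chow--Rashevskii (using that the subalgebra generated by $\fp$ is an ideal of the simple algebra $\fg$), bound $\lambda_1$ by a quantity depending only on $K$, and finish by a finiteness argument. There are two genuine differences worth noting. First, for the $\lambda_1$ bound the paper invokes the Riemannian submersion $(G,g_h)\to(G/K,-B_\fg|_\fp)$ with totally geodesic fibres and lifts base-eigenfunctions, whereas you compute the Casimir $\Cas_{\pi,g}$ on the space $V_\pi^{\fh}$ of $\fh$-fixed vectors; these are equivalent (the base eigenfunctions are precisely the $K$-spherical matrix coefficients), but your formulation makes the independence from $q$ visible without appealing to submersion theory. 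Second, for the finiteness step the paper quotes a result of Gordon and Sutton that only finitely many subgroups $K$ are needed up to isometry, while you instead use the monotonicity of both $D_{\fm}$ and the minimal spherical Casimir under the inclusion $\fh\subseteq\fh_{\max}$ to reduce to \emph{maximal} proper subalgebras, and then invoke Dynkin's finiteness of maximal connected subgroups; your route is more self-contained, the paper's is shorter to state.
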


Among many other results, they confirm its validity for $\SU(2)$ in  \cite[Thm.~8.5]{EldredgeGordinaSaloff18}.
Explicit values of $C$ for $\SU(2)$ and $\SO(3)$ can be found in \cite[Thm.~1.4]{Lauret-SpecSU(2)}. 

The main goal of this article is to give a simple and short proof of the validity of the weaker conjecture after restricting to naturally reductive left-invariant metrics on a compact connected simple Lie group $G$ (Theorem~\ref{thm:main} below).
The reader should not consider this result as a strong evidence of Conjecture~\ref{conj}. 

We next define naturally reductive metrics (see for instance \cite[\S7.G]{Besse}).
Let $(M,g)$ be a homogeneous Riemannian manifold. 
We fix a base point $m\in M$ and $H$ a transitive group of isometries of $(M,g)$.
Let $K$ be the isotropy subgroup at $m$, that is, $K=\{a\in H: a\cdot m=m\}$. 
The Lie algebra $\fh$ of $H$ decomposes into a sum $\fh = \fk \oplus \fp$, where $\fk$ denotes the Lie algebra of $K$ and $\fp$ is $\Ad(K)$-invariant. 
For $X\in\fh$, we write $X=X_\fk+X_\fp$ according to this decomposition.
We have the identifications $M \equiv H/K$ and $T_mM\equiv \fp$, and the metric $g$ on $M$ corresponds to an $\Ad(K)$-invariant inner product $\innerdots_m$ on $\fp$.

\begin{definition}\label{def:naturallyreductive}
A Riemannian manifold $(M,g)$ is said to be \emph{naturally reductive} if it admits a transitive action by isometries by a Lie group $H$ and an $\Ad(K)$-invariant complement $\fp$ as above such that 
\begin{equation}
\inner{[Z,X]_\fp}{Y}_m + \inner{X}{[Z,Y]_\fp}_m =0
\end{equation}
for all $X,Y,Z\in\fg$. (Here, $[\cdot,\cdot]$ denotes the bracket of the Lie algebra $\fh$.)
\end{definition}

A naturally reductive space can be seen as a generalization of a symmetric space.
Among their nice geometric properties, we have that every geodesic is an orbit of an one-parameter group of isometries. 
Normal homogeneous spaces are also naturally reductive.
However, the class of naturally reductive spaces
is much broader and contains many other interesting cases.

We now give a simple construction of naturally reductive metrics in the case of interest of this paper, that is, when $M$ is a simple compact connected Lie group. 

\begin{remark}\label{rem:construction}
Let $G$ be a semisimple compact connected Lie group.	
It is well known that the space of left-invariant metrics on $G$ is in 1-to-1 correspondence with the space of inner products on its Lie algebra $\fg$. 
Let $K$ be a closed subgroup of $G$ and let $\fp$ denote the orthogonal complement of $\fk$ in $\fg$ with respect to the Killing form $B_\fg$ of $\fg$. 
(We recall that $B_\fg$ is an $\Ad(G)$-invariant negative definite bilinear form on $\mathfrak g$.)
Given $h$ a bi-invariant metric on $K$ and $\alpha$ a positive real number, we define the left-invariant metric $g_{h,\alpha}$ on $G$ induced by the inner product on $\fg$ given by
\begin{equation}\label{eq:g_alpha,h}
g_{\alpha,h}(X,Y) = h(X_{\fk},Y_{\fk}) + \alpha \, (-B_\fg)(X_{\fp},Y_{\fp})
\qquad\text{ for }X,Y\in\fg.
\end{equation}

D'Atri and Ziller proved that $g_{\alpha,h}$ is naturally reductive for all $\alpha,h$ as above
(see \cite[Thm.~1]{DAtriZiller}).
(Note that the transitive group $H$ as in Definition~\ref{def:naturallyreductive} is $G\times K$ acting on $G$ as $(a,b)\cdot x = axb^{-1}$ for $a,x\in G$, $b\in K$.)
Moreover, they also proved that any naturally reductive metric on $G$ is isometric to one of the above form when $G$ is assumed simple (see \cite[Thm.~3]{DAtriZiller}).  
\end{remark}

We are now in position to prove the main theorem. 

\begin{theorem}\label{thm:main}
Let $G$ be a compact connected simple Lie group. 
There exists $C=C(G)>0$ such that 
\begin{equation}\label{eq:inequality}
\lambda_1(G,g) \leq \frac{C}{\diam(G,g)^2}
\end{equation}
for all naturally reductive metrics $g$ on $G$. 
\end{theorem}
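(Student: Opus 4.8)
The plan is to exploit the explicit D'Atri--Ziller description of naturally reductive metrics on a simple $G$ recalled in Remark~\ref{rem:construction}: every such metric is isometric to some $g_{\alpha,h}$, where $K$ ranges over closed subgroups of $G$, $h$ is a bi-invariant metric on $K$, and $\alpha>0$. Since $G$ has only finitely many conjugacy classes of closed connected subgroups up to the relevant equivalence --- or, more carefully, since the semisimple part of $\fk$ lies in a finite list and the central torus contributes a factor on which we have full freedom --- it suffices to bound $\lambda_1(G,g_{\alpha,h})\,\diam(G,g_{\alpha,h})^2$ by a constant depending only on $G$ and $K$, uniformly in $\alpha$ and $h$, and then take the maximum over the finitely many relevant $K$.

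For a fixed $K$, the strategy is to compare $(G,g_{\alpha,h})$ with the bi-invariant metric. First I would produce an \emph{upper bound for $\lambda_1$}: the function space on $G$ contains, for a suitable unitary irreducible representation, matrix coefficients that are eigenfunctions, and by choosing a representation whose highest weight is small one gets a test function whose Rayleigh quotient is controlled. Concretely, using the standard formula for the Laplace eigenvalue of a naturally reductive metric in terms of Casimir-type data (the eigenvalue on the isotypic component of an irreducible $\pi$ is $\alpha^{-1}$ times the Casimir of $\pi$ relative to $-B_\fg$ plus a correction coming from how $\pi|_K$ decomposes and the metric $h$), one sees that $\lambda_1(G,g_{\alpha,h}) \le \alpha^{-1} c_1(G)$ for a constant depending only on $G$, because one can always test with a representation trivial on the semisimple part of $K$ so the $h$-dependent term only helps. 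Second I would produce a \emph{lower bound for the diameter}: the identity component of $K$, equipped with $h$, sits inside $(G,g_{\alpha,h})$ as a totally geodesic submanifold isometric to $(K,h)$, but more usefully, restricting attention to the $\fp$-directions, any path has length at least $\sqrt{\alpha}$ times its $(-B_\fg)$-length in the $\fp$ part, so $\diam(G,g_{\alpha,h}) \ge \sqrt{\alpha}\, c_2(G)$ where $c_2(G)>0$ is essentially the $(-B_\fg)$-distance across $G/K$; combining, $\lambda_1 \diam^2 \ge$ --- wait, we want an upper bound on the product, so we instead pair the $\alpha^{-1}$ upper bound on $\lambda_1$ with the $\sqrt\alpha$ \emph{lower} bound on $\diam$ only if $\alpha$ is bounded below, and handle large and small $\alpha$ separately.

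More precisely, I would split into two regimes. For $\alpha$ bounded below by a fixed constant (say $\alpha \ge \alpha_0(G)$), use $\lambda_1 \le \alpha^{-1}c_1 \le c_1/\alpha_0$ together with Peter Li's bound~\eqref{eq:Li}, or directly with a crude lower bound $\diam \ge c_3(G)>0$ coming from the $\fp$-part as above, giving $\lambda_1\diam^2$ bounded. For $\alpha \le \alpha_0(G)$ small, the metric is ``collapsing'' in the $\fp$-directions and the diameter stays comparable to that of $(K,h)$ rescaled, but $\lambda_1$ could be large; here instead I would use that $\lambda_1(G,g_{\alpha,h}) \le \lambda_1(K,h) + \alpha^{-1}(\text{correction})$ is the wrong direction, so rather: take a test function pulled back from $G/K$ that is constant along $K$-orbits; its Rayleigh quotient involves only the $\fp$-part, scaling like $\alpha^{-1}$ times a fixed number, while the diameter is at least $\sqrt\alpha\,c_2$, so the product $\lambda_1\diam^2$ is again bounded by a fixed constant. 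The main obstacle I anticipate is making the ``finitely many $K$'' reduction fully rigorous --- in particular dealing with the central-torus part of $K$ and with the bi-invariant metric $h$ on it, which is a continuous parameter --- and checking that the $h$-dependent correction term in the spectrum never spoils the $\alpha$-uniform upper bound on $\lambda_1$; this requires writing the naturally reductive Laplace spectrum (following Lichnerowicz / the D'Atri--Ziller framework) explicitly enough to see the sign and size of every term, and then verifying the test-function estimates are genuinely independent of $h$.
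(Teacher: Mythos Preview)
Your proposal has a genuine gap: to bound $\lambda_1(G,g)\,\diam(G,g)^2$ from above you need an \emph{upper} bound on $\diam(G,g_{\alpha,h})$ that is uniform in $h$, and you never produce one. All of your diameter estimates (``$\diam \ge \sqrt{\alpha}\,c_2$'', ``crude lower bound $\diam \ge c_3(G)$'') go the wrong way; combining $\lambda_1 \le \alpha^{-1}c_1$ with $\diam \ge \sqrt{\alpha}\,c_2$ yields nothing about the product from above. Invoking Li's inequality \eqref{eq:Li} does not help either, since it is a lower bound on $\lambda_1\diam^2$. The real danger is that, with $\alpha$ fixed, one can let some direction of $h$ on $\fk$ blow up, and it is not a priori clear that the diameter stays bounded.

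The paper resolves exactly this point with an idea absent from your plan. After using scale invariance to set $\alpha=1$ (which also makes your regime-splitting in $\alpha$ unnecessary), it observes that any two points of $G$ can be joined by curves tangent to the left-invariant distribution generated by $\fp$, because $\fp$ generates $\fg$ as a Lie algebra when $G$ is simple; by Chow--Rashevskii the associated sub-Riemannian diameter $\diam(G,\fp,-B_\fg|_\fp)$ is finite, and since $g_h|_\fp=-B_\fg|_\fp$ this sub-Riemannian diameter dominates $\diam(G,g_h)$ \emph{for every} $h$. Your upper bound on $\lambda_1$ via functions pulled back from $G/K$ is the same as the paper's (Riemannian-submersion) argument, and the finiteness of the relevant $K$'s is handled by citing Gordon--Sutton, so the only missing ingredient in your outline is precisely this sub-Riemannian diameter bound.
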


\begin{proof}
Let $G$ be a compact connected simple Lie group. 
We pick any naturally reductive metric on $G$, that is, $g_{\alpha,h}$ as in \eqref{eq:g_alpha,h}, for some choice of $K$, $\alpha$, and $h$ as in Remark~\ref{rem:construction}. 
Since the term $\lambda_1(M,g)\diam(M,g)^2$ is invariant under positive scaling of $g$, we can assume without loosing generality that $\alpha=1$.
We abbreviate $g_h=g_{h,1}$.  
Moreover, we avoid the case when $\fp=0$ for being trivial since $g_{h}$ is necessarily a negative multiple of the Killing form $B_{\fg}$ of $\fg$.
Under this new assumption, $\fp\neq0$, we will not use that $h$ is a bi-invariant metric on $K$. 
More precisely, $g_h$ is defined as in \eqref{eq:g_alpha,h} with $\alpha=1$ and $h$ any inner product on $\mathfrak k$. 

We need to introduce some notions to give an upper bound for the diameter. 
Recall that a sub-Riemannian manifold is a triple $(M,\mathcal H,s)$, where $M$ is a smooth manifold, $\mathcal H$ is a subbundle of $TM$ and $s=(s_m)_{m\in M}$ denotes a family of inner product on $\mathcal H$ which smoothly vary with the base point
(see \cite{Montgomery-tour} for a general reference).
A smooth curve $\gamma$ on $(M,\mathcal H,g)$ is called \emph{horizontal} if $\gamma'(t)\in \mathcal H_{\gamma(t)}$ for all $t$. 
When $\mathcal H$ satisfies the \emph{bracket-generating condition} (i.e.\ the Lie algebra generated by vector fields in $\mathcal H$ spans at every point the tangent space of $M$), also known as \emph{Hörmander condition}, the Chow--Rashevskii Theorem ensures that $\diam (M,\mathcal H,s)<\infty$ when $M$ is compact, that is, any two points in $M$ can be joined by a horizontal curve on $(M,\mathcal H,s)$. 
It follows immediately that, if $g$ is a Riemannian metric on $M$ and we define the sub-Riemannian metric $s$ on $(M,\mathcal H)$ given by $s_m=g_m|_{\mathcal H_m}$ for all $m\in M$, then 
\begin{equation}\label{eq:cota-diam-gral}
\diam(M,g)\leq \diam(M,\mathcal H,s). 
\end{equation}

We consider the sub-Riemannian manifold $(G,\mathcal H,s)$ determined by 
\begin{align}
\mathcal H&=\bigcup_{a\in G}dL_a(\fp), &
s_a\big(dL_a(X),dL_a(Y)\big) &= -B_{\fg}(X,Y) ,
\end{align}
for $X,Y\in\fp$ and $a\in G$. 
Here, $L_a:G\to G$ is given by $L_a(x)=ax$ and $\fp$ is seen as a subspace of $T_eG\equiv \fg$. 
Sub-Riemannian structures on arbitrary Lie groups of this form are called left invariant.
Since $s$ is the restriction of $g_h$ to $\mathcal H$,  \eqref{eq:cota-diam-gral} gives 
\begin{equation}\label{eq:cota-diam_1}
\diam(G,g_h)\leq \diam (G,\mathcal H,s).
\end{equation} 

We next show that $\diam (G,\mathcal H,s)<\infty$.
It is sufficient to show that $\mathcal H$ satisfies the bracket-generating condition by Chow--Rashevskii Theorem. 
Since $\mathcal H$ is left invariant, this condition is equivalent to show that the Lie subalgebra of $\fg$ generated by $\fp$, say $\fa$, satisfies $\fa=\fg$.
We next prove that $\fa$ is an ideal of the simple Lie algebra $\fg$, giving the required assertion. 
For $X=X_{\fk}+X_{\fp}\in\fg$ and $Y\in\fa$, we have that $[X,Y] = [X_{\fk},Y]+ [X_{\fp},Y]$, so it remains to show that $[X_{\fk},Y]\in\fa$ since $[X_{\fp},Y]\in\fa$ is clear. 
The subspace $\fa$ is spanned by elements of the form $[Y_1,[Y_2,\cdots ,[Y_{n-1},Y_n]\cdots]]$ with $Y_1,\dots,Y_n\in\fp$. 
We next show by induction on $n$ that $[X_{\fk},Y]\in\fa$ for such element $Y$.
The case $n=1$ is clear since $[X_{\fk},Y]=[X_{\fk},Y_1]\in\fp$ because $Y=Y_1\in\fp$ and $\fp$ is $\ad(\fk)$-invariant. 
Furthermore, the Jacobi identity implies
\begin{equation}
[X_{\fk},Y] 
= \big[[X_{\fk},Y_1],[Y_2,\cdots ,[Y_{n-1},Y_n]\cdots]\big] + \big[Y_1,[X_{\fk},[Y_2,\cdots ,[Y_{n-1},Y_n]\cdots]]\big],
\end{equation}
thus the inductive step also follows. 
This concludes the proof of $\fa=\fg$ and its consequence $\diam (G,\mathcal H,s)<\infty$.

We now consider the Riemannian submersion with totally geodesic fibers given by 
\begin{equation}
(K,h)\longrightarrow (G,g_h)
\stackrel{\pi}{\longrightarrow }
(G/K,-B_\fg|_\fp),
\end{equation}
which is a particular case of the general construction in \cite[\S{}2.2]{Berard-BergeryBourguignon82} and \cite[Thm.~9.80]{Besse}.
The spectral theory of Riemannian submersions with totally geodesic fibers has been intensively studied; see for instance \cite{GilkeyLeahyPark-book}.
It is well known that if $f$ is an eigenfunction of the Laplace--Beltrami operator $\Delta_b$ of the base space $(G/K,-B_\fg|_\fp)$ with corresponding eigenvalue $\lambda$, then $f\circ \pi$ is an eigenfunction of the Laplace--Beltrami operator $\Delta_{g_h}$ of $(G,g_h)$ with corresponding eigenvalue $\lambda$. 
Consequently, the smallest positive eigenvalue of $\Delta_b$ is an upper bound for the smallest positive eigenvalue of $\Delta_{g_h}$, that is
\begin{equation}\label{eq:cota-lambda_1}
\lambda_1(G,g_h)\leq \lambda_1(G/K,-B_\fg|_\fp).
\end{equation}

We have shown that $\lambda_1(G,g_h) \diam(G,g_h)^2 \leq \lambda_1(G/K,-B_\fg|_\fp) \diam (G,\mathcal H,s)^2$ for every left-invariant metric $h$ on $\fk$, when $\fp\neq0$.
Note that the right hand side depends on $K$ and $\fp$, but not on $h$.
We conclude the proof by a finiteness argument on the choice of $K$ and $\fp$.

There is a finite collection $\mathcal K$ of closed subgroups of $G$ such that, for any naturally reductive left-invariant metric $g$ on $G$, there are $K\in\mathcal K$, $\alpha>0$, and $h$ a bi-invariant metric on $\fk$ such that $(G,g)$ is isometric to $(G,g_{h,\alpha})$ as in Remark~\ref{rem:construction} (see \cite[Cor.~3.7]{GordonSutton10}). 
Hence, by taking 
\begin{equation}
C=\max_{K\in\mathcal K\smallsetminus\{G\}} \; \left\{
\lambda_1(G/K,-B_\fg|_\fp) \;\diam (G,\fp,-B_\fg|_\fp)^2, \;
\lambda_1(G,-B_{\fg})\;\diam(G,-B_{\fg})^2 
\right\}, 
\end{equation}
we conclude that \eqref{eq:inequality} holds for all naturally reductive left-invariant metrics on $G$. 
\end{proof}

Inside the proof, for a fixed closed subgroup $K$ of $G$ with $\dim K<\dim G$ (i.e.\ $\fp\neq0$), it was not used that $h$ is a bi-invariant inner product on $\fk$. 
Thus, it was also proven the following statement. 

\begin{theorem}
Let $G$ be a compact connected simple Lie group, let $K$ be a closed subgroup of $G$ of dimension strictly less than $\dim G$, and let $\fp$ denote the orthogonal complement of $\fk$ in $\fg$ with respect to the Killing form $B_\fg$ of $\fg$. 
There exists $C=C(G,K)>0$ such that 
\begin{equation}\label{eq:inequality}
\lambda_1(G,g) \leq \frac{C}{\diam(G,g)^2}
\end{equation}
for every left-invariant metric $g$ on $G$ satisfying that $g(\fk,\fp)=0$ and $g|_{\fp}$ is a negative multiple of $B_\fg|_{\fg}$. 
\end{theorem}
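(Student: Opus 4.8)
The plan is to reuse verbatim the entire argument already carried out inside the proof of Theorem~\ref{thm:main}, observing that every step there applies to an arbitrary inner product $h$ on $\fk$ rather than only to bi-invariant ones; the only thing that genuinely changed is that we no longer invoke the finiteness result \cite[Cor.~3.7]{GordonSutton10}, because $K$ is now fixed. Concretely, I would first note that by positive scale-invariance of the quantity $\lambda_1(G,g)\diam(G,g)^2$ we may assume $g=g_h$ with $g_h$ as in \eqref{eq:g_alpha,h} with $\alpha=1$ and $h=g|_{\fk}$ an arbitrary inner product on $\fk$; this is legitimate precisely because $g(\fk,\fp)=0$ and $g|_{\fp}$ is a negative multiple of $B_\fg|_{\fp}$, so after rescaling $g$ has exactly the form \eqref{eq:g_alpha,h}.

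Next I would recall the two bounds established in the proof of Theorem~\ref{thm:main}. For the diameter: the left-invariant sub-Riemannian structure $(G,\mathcal H,s)$ with $\mathcal H=\bigcup_{a\in G}dL_a(\fp)$ and $s_a(dL_a(X),dL_a(Y))=-B_\fg(X,Y)$ satisfies $\diam(G,g_h)\leq \diam(G,\mathcal H,s)$ by \eqref{eq:cota-diam-gral}, and $\diam(G,\mathcal H,s)<\infty$ by the Chow--Rashevskii Theorem, since the Lie subalgebra $\fa$ of $\fg$ generated by $\fp$ is a nonzero ideal of the simple algebra $\fg$, hence equals $\fg$ — exactly the bracket-generating computation already performed, which uses only that $\fp$ is $\ad(\fk)$-invariant and $\fp\neq0$ (guaranteed by $\dim K<\dim G$), not that $h$ is bi-invariant. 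For the eigenvalue: the Riemannian submersion with totally geodesic fibers $(K,h)\to(G,g_h)\xrightarrow{\pi}(G/K,-B_\fg|_\fp)$ pulls back eigenfunctions of $\Delta_b$ to eigenfunctions of $\Delta_{g_h}$ with the same eigenvalue, so \eqref{eq:cota-lambda_1} gives $\lambda_1(G,g_h)\leq \lambda_1(G/K,-B_\fg|_\fp)$; this construction, from \cite[\S{}2.2]{Berard-BergeryBourguignon82} and \cite[Thm.~9.80]{Besse}, again only requires $g_h$ to be of the block form \eqref{eq:g_alpha,h} and does not care whether $h$ is bi-invariant.

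Combining these, for every inner product $h$ on $\fk$ one gets $\lambda_1(G,g_h)\diam(G,g_h)^2 \leq \lambda_1(G/K,-B_\fg|_\fp)\,\diam(G,\mathcal H,s)^2$, and the right-hand side is a finite constant depending only on $G$ and $K$ (note $\diam(G,\mathcal H,s)$ was shown finite, and $\lambda_1(G/K,-B_\fg|_\fp)$ is simply the first eigenvalue of a fixed compact normal homogeneous space). Hence setting $C=C(G,K):=\lambda_1(G/K,-B_\fg|_\fp)\,\diam(G,\mathcal H,s)^2$ yields \eqref{eq:inequality} for every left-invariant $g$ with $g(\fk,\fp)=0$ and $g|_\fp$ a negative multiple of $B_\fg|_\fp$, after undoing the initial rescaling.

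I do not expect any real obstacle here: the statement is essentially a repackaging of the intermediate inequality already proved, with the finiteness over $\mathcal{K}$ stripped away because $K$ is now part of the data. The only point deserving a line of care is checking that the hypotheses on $g$ (block-diagonality plus $g|_\fp$ proportional to $-B_\fg|_\fp$) are exactly what is needed to bring $g$, after scaling, into the form \eqref{eq:g_alpha,h} with $\alpha=1$; once that is noted, the diameter bound via Chow--Rashevskii and the eigenvalue bound via the totally geodesic Riemannian submersion go through word for word, since — as emphasized in the paragraph following the proof of Theorem~\ref{thm:main} — neither used bi-invariance of $h$.
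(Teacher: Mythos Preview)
Your proposal is correct and is essentially identical to the paper's own argument: the paper states this theorem precisely as a corollary of the proof of Theorem~\ref{thm:main}, pointing out (just as you do) that for a fixed $K$ with $\fp\neq0$ neither the diameter bound via Chow--Rashevskii nor the eigenvalue bound via the Riemannian submersion uses bi-invariance of $h$, so the finiteness step over $\mathcal K$ is simply dropped.
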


Theorem~\ref{thm:main} applied to the well-known situation $G=\SU(2)$ returns that \eqref{eq:EGS} is valid for a codimension one subspace of the space of left-invariant metrics up to isometry, as shown in the next example. 
For higher-dimensional compact connected simple Lie groups, the analogous codimension increases considerably.

\begin{example}
We consider $G=\SU(2)$, which is diffeomorphic to the $3$-sphere $S^3$. 
The elements
\begin{align}\label{eq2:X1X2X3}
X_1&= \begin{pmatrix} i&0 \\ 0&-i \end{pmatrix}, &
X_2&= \begin{pmatrix} 0&1 \\ -1&0 \end{pmatrix}, &
X_3&= \begin{pmatrix} 0&i \\ i&0 \end{pmatrix},
\end{align}
form a basis of the Lie algebra $\su(2)$.
For positive real numbers $a_1,a_2,a_3$, let $g_{(a_1,a_2,a_3)}$ denote the left-invariant metric on $\SU(2)$ induced by the inner product on $\su(2)$ given by
$
g_{(a_1,a_2,a_3)}(X_i,X_j)= a_i^2\, \delta_{i,j}.
$
Note that $g_{(1,1,1)}$ is a negative multiple of $B_{\su(2)}$.
Although the dimension of the space of inner products on $\su(2)$ is $6$, Milnor~\cite{Milnor76} proved that every left-invariant metric on $\SU(2)$ is isometric to $g_{(a_1,a_2,a_3)}$ for some $a_1,a_2,a_3>0$.
(Permutations of $(a_1,a_2,a_3)$ do note change the isometry class of $g_{(a_1,a_2,a_3)}$; see e.g.\ \cite[Lem.~2.8]{EldredgeGordinaSaloff18}.) 

The only proper closed connected subgroup of $G$ up to conjugation is the torus 
\begin{equation}
T:=\left\{\begin{pmatrix}
e^{i\theta} \\ & e^{-i\theta}
\end{pmatrix} : \theta\in\R\right\}. 
\end{equation}
Up to homotheties, there is only one bi-invariant metric on $T$.
Since the Lie algebra of $T$ is $\ft:=\Span_\R\{X_1\}$, D'Atri and Ziller's Theorem mentioned in Remark~\ref{rem:construction} (\cite[Thm.~3]{DAtriZiller}) ensures that every naturally reductive left-invariant metric on $\SU(2)$ is isometric to 
\begin{equation}
g_{\alpha,\beta} := \beta\, g_{(1,1,1)}|_{\ft} \oplus  \alpha\, g_{(1,1,1)}|_{\fp} = g_{(\sqrt{\beta},\sqrt{\alpha},\sqrt{\alpha})}
\end{equation}
for some $\alpha,\beta>0$, where $\fp=\Span_\R\{X_2,X_3\}$.
These metrics are precisely the $3$-dimensional Berger spheres. 
\end{example}

\subsection*{Acknowledgments}
The author is greatly indebted to the referee for many accurate suggestions that have helped to improve significantly the presentation of the paper.

\bibliographystyle{plain}

\end{document}